\newtheorem*{rep@theorem}{\rep@title}
\newcommand{\newreptheorem}[2]{%
\newenvironment{rep#1}[1]{%
 \def\rep@title{#2 \ref{##1}}%
 \begin{rep@theorem}}%
 {\end{rep@theorem}}}
\newtheorem{prop}{\bf Proposition}[section]
\newtheorem{thm}[prop]{\bf Theorem}
\newtheorem{lem}[prop]{\bf Lemma}
\newtheorem{claim}[prop]{\bf Claim}
\theoremstyle{definition}
\newtheorem{defi}[prop]{Definition}
\newtheorem{rmk}[prop]{Remark}
\newcommand{\map}{\rightarrow}
\newcommand{\Zk}{\mathbb{Z}/k\mathbb{Z}}
\title[Number of commensurable fibrations]{On the number of commensurable fibrations on a hyperbolic 3-manifold}
\author{Hidetoshi Masai}
\address{Department of Mathematical and Computing Sciences, Tokyo Institute of
Technology, O-okayama, Meguro-ku, Tokyo 152-8552 Japan}
\email{masai9 at is.titech.ac.jp}
\date{}
\subjclass[2000]{Primary~57M25. Secondary~57N10}
\begin{document}


\maketitle
\begin{abstract}
By a work of Thurston, it is known that if a hyperbolic fibred $3$-manifold $M$ has Betti number greater than 1, then 
$M$ admits infinitely many distinct fibrations.
For any fibration $\omega$ on a hyperbolic $3$-manifold $M$, 
the number of fibrations on $M$ that are commensurable in the sense of Calegari-Sun-Wang
 to $\omega$ is known to be finite.
In this paper, we prove that the number can be arbitrarily large.
\end{abstract}
\section{Introduction}
For a given fibration on a $3$-manifold,
there is an associated pair $(F,\phi)$ of the fibre surface $F$ and the monodromy $\phi:F\map F$.
Conversely, for any pair $(F,\phi)$ of a surface $F$  and an automorphism $\phi$ on $F$, we can construct
an associated $3$-manifold by taking the mapping torus.
Here, by an automorphism, we mean an isotopy class of self-homeomorphisms.
Calegari-Sun-Wang defined commensurability on pairs of type $(F,\phi)$.
\begin{defi}[\cite{CSW}]\label{def.cover}
A pair $(\widetilde{F},\widetilde{\phi})$ {\em covers} $(F,\phi)$ if there is a finite cover $\pi:\widetilde{F}\map F$ and representative homeomorphisms $\tilde{f}$ of $\widetilde\phi$ and $f$ of
$\phi$ so that $\pi \widetilde{f} = f \pi$ as maps $\widetilde{F}\map F$.
\end{defi}
\begin{defi}[\cite{CSW}]\label{def.cms}
Two pairs $(F_1, \phi_1)$ and $(F_2, \phi_2)$ are {\em commensurable} if there is a surface $\widetilde{F}$,
automorphisms $\widetilde{\phi_1}$ and $\widetilde{\phi_2}$, and nonzero integers $k_1$ and $k_2$, so that $(\widetilde{F},\widetilde\phi_i)$ covers
$(F_i,\phi_i)$ for $i = 1,2 $ and if $\widetilde{\phi_1}^{k_1} = \widetilde{\phi_2}^{k_2}$ as automorphisms of $\widetilde{F}$.
\end{defi}

Each commensurability class endows a partial order by the covering relation.
In \cite{CSW} (closed case) and \cite{Mas} (cusped case), it has been proved that in hyperbolic case, i.e. the case where
monodromies are pseudo-Anosov, 
there is a unique minimal (orbifold) pair in each commensurability class.
We consider fibrations on a connected orientable hyperbolic $3$-manifolds of finite volume $M$.
Two fibrations on $M$ are said to be {\em symmetric} if there exists a self-homeomorphism of $M$ that
maps one to the other.
If there are two non-symmetric but commensurable fibrations on $M$, 
then there are two distinct ways to cover the orbifold  $O$ associated to the minimal pair in the commensurability class.
Such coverings correspond to subgroups of the orbifold fundamental group of $O$ with the same index.
Since the number of subgroups of a given index in a finitely generated group is finite,
we see that on $M$, the number of fibrations that are commensurable to a fibration is always finite, see
\cite[Corollary 3.2]{CSW}.
Then it is natural to investigate how many commensurable fibrations can exist on $M$.
In \cite{Mas}, we observed if $M$ dose not have hidden symmetries, then $M$ has no non-symmetric but commensurable
fibrations.
We further showed that there are $3$-manifolds with hidden symmetries that have no non-symmetric but commensurable fibrations.
On the other hand, we also constructed manifolds that have two non-symmetric but commensurable fibrations.
In this paper we prove that the number of commensurable fibrations can be arbitrarily large.
\begin{thm}\label{thm.main}
For any $n\in\mathbb{N}$, there exists a hyperbolic $3$-manifolds with at least $n$ mutually 
non-symmetric but commensurable fibrations.
\end{thm}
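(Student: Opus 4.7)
I would prove Theorem~\ref{thm.main} by constructing, for each $n$, a hyperbolic $3$-manifold $M$ admitting at least $n$ inequivalent covers of a fixed fibered minimal orbifold pair $(O_0,\phi_0)$, where two covers are equivalent iff related by an element of $\mathrm{Isom}(M)$. By the uniqueness of the minimal pair in each commensurability class (cited from \cite{CSW} and \cite{Mas}) and the discussion recalled in the introduction, fibrations on $M$ commensurable with a given one correspond to covering maps $M \to O_0 \times_{\phi_0} S^1$, and two such fibrations are symmetric iff the covers differ by composition with a self-isometry of $M$. Therefore it suffices to exhibit $M$ with at least $n$ $\mathrm{Isom}(M)$-orbits of such covers.

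Setting $\Gamma = \pi_1^{\mathrm{orb}}(O_0 \times_{\phi_0} S^1)$ and viewing $\pi_1(M)$ as a finite-index subgroup, the covers $M \to O_0 \times_{\phi_0} S^1$ are parametrized by the double-coset space
\[
\Gamma \backslash \{g \in \mathrm{Comm}(\pi_1(M)) : g\pi_1(M)g^{-1} \subset \Gamma\} / N_{\mathrm{Comm}}(\pi_1(M)),
\]
with the right-hand quotient by the normalizer absorbing the $\mathrm{Isom}(M)$-equivalence. To enlarge this set I would take $M$ arithmetic so that by Margulis' theorem $\mathrm{Comm}(\pi_1(M))$ is dense in $\PSL$, providing considerable freedom. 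Concretely, starting from a small arithmetic fibered orbifold $O_0$ (for example in a Bianchi commensurability class) with a pseudo-Anosov orbifold monodromy $\phi_0$, I would pass to a deep congruence cover $M$ of $O_0 \times_{\phi_0} S^1$; the count of $\Gamma$-conjugacy classes of $\PSL$-conjugates of $\pi_1(M)$ inside $\Gamma$ increases with the depth, while $|\mathrm{Isom}(M)|$ is bounded in terms of $\mathrm{Vol}(M)$ by standard estimates for arithmetic hyperbolic manifolds, so for sufficiently deep $M$ the number of orbits exceeds any prescribed $n$.

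\textbf{Main obstacle.} The hardest step I foresee is confirming non-symmetry: distinct cosets give distinct covering maps, but a priori they could be identified by some fiber-preserving self-homeomorphism of $M$ that I have not controlled. This forces a careful analysis of the $\mathrm{Isom}(M)$-action on the set of covers---equivalently, on the integer points of the relevant fibered face of the Thurston norm ball of $M$---and will likely rely on explicit arithmetic invariants (such as a congruence character on $\mathrm{Comm}/\Gamma$) distinguishing the $n$ chosen cosets. A secondary technical issue, whether each cover actually produces a genuine fibration on $M$ rather than a merely virtual one, is handled automatically by starting from an orbifold already fibered and pulling back the fibration along the chosen covering.
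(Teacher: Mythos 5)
Your overall frame---count the coverings of the minimal orbifold pair up to the action of $\mathrm{Isom}(M)$---is consistent with the finiteness discussion in the introduction, but the two steps that would actually prove the theorem are both left open, and one of them you flag yourself as unresolved. First, the quantitative claim that a deep congruence cover has more $\Gamma$-conjugacy classes of conjugates of $\pi_1(M)$ inside $\Gamma$ than it has isometries is not established: the standard bound on $|\mathrm{Isom}(M)|$ is linear in $\mathrm{Vol}(M)$, and the volume of the congruence cover also goes to infinity, so "for sufficiently deep $M$ the number of orbits exceeds $n$" does not follow from what you wrote; you would need the number of classes to grow strictly faster than the volume, and no such estimate is given. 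Second, and more fundamentally, producing $n$ covers is not the content of the theorem---showing the resulting fibrations are mutually \emph{non-symmetric} is. You name this as the "main obstacle" and defer it to an unspecified "congruence character"; until that invariant is exhibited and shown to separate the $n$ cosets, nothing is proved. There is also no argument that an arithmetic fibered orbifold with the required properties exists and that the covers you select are covers of \emph{pairs} (fiber-preserving), which is what CSW-commensurability requires.

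The paper avoids all of this by an explicit construction. It takes a free $\Zk$-action on a hyperbolic rational homology $3$-sphere $M$ (Cooper--Long), finds a hyperbolic fibred knot $K$ in $M/\Zk$ whose class dies in $\Zk$ (Harer, Soma), so that $L = p^{-1}(K)$ is a fibred link whose $k$ components are permuted freely and transitively; the symmetric fibration class $\omega = \sum (m_{[s]})^*$ is then perturbed inside Thurston's fibered cone to $\eta = l\omega + \omega'$ with pairwise distinct weights, giving a full $\Zk$-orbit of $k$ distinct fibration classes $[s]\cdot\eta$. Passing to the degree-$n!$ cyclic cover determined by $\eta$ (with $k = n!$), a computation of $\eta([s]\cdot A_\eta) = \gcd(s,k-s)\mathbb{Z}$ shows the lifted fibres have Euler characteristics in distinct ratios---a concrete invariant that certifies non-symmetry---while commensurability is automatic because all of them are lifts of fibrations of the single manifold $M\setminus L$ related by its $\Zk$-symmetry. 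If you want to salvage your approach, the missing piece is exactly such a computable invariant separating your cosets; the Euler characteristic of the fibre is the one the paper uses.
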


\section{Proof of Theorem \ref{thm.main}}
In this section, we prove Theorem \ref{thm.main}.
We first prepare several lemmas.
Recall that for a given fibration on a $3$-manifold $M$, there is an associated element $\omega\in H^1(M;\mathbb{Z})$ where
for a loop $\gamma\subset M$, 
$\omega(\gamma)$ is equal to the intersection number of $\gamma$ and a chosen fibre surface in $M$.
\begin{lem}\label{lem.key}
For a given finite group $G$, there is a hyperbolic rational homology $3$-sphere $M$ and 
a hyperbolic fibred link $L\subset M$ such that
$G$ acts freely on $M$, 
and further $G$ acts on the components of $L$ freely and transitively i.e. 
the components of $L$ admit labels by elements of $G$ satisfying
\begin{eqnarray}
\displaystyle L = \bigsqcup_{g\in G}L_g,~\mathrm{and}~ \forall g,h\in G,~ h\cdot L_g = L_{hg}. \label{eq.label}
\end{eqnarray}
Furthermore, let $m_g$ denote the meridian of $L_g$ for each $g\in G$.
Then the sum of the duals $\sum_{g\in G} (m_g)^* \in H^1(M\setminus L;\mathbb{Z})$  
corresponds to a fibration on $M\setminus L$.
\end{lem}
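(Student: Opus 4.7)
The plan is to realize $M$ as a $G$-equivariant Dehn filling of a cusped hyperbolic mapping torus built from a $G$-symmetric pseudo-Anosov on a punctured surface. In outline: construct a free $G$-cover of a closed surface equipped with a pseudo-Anosov commuting with the $G$-action and fixing a free $G$-orbit of non-singular points; puncture at this orbit; take the mapping torus; and close up by a carefully chosen $G$-equivariant filling.

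First, construct the base data. Let $\hat S_0$ be a closed orientable surface of sufficiently large genus, $\alpha\colon\pi_1(\hat S_0)\twoheadrightarrow G$ a surjection, and $\pi\colon\hat S\to\hat S_0$ the associated regular $G$-cover, so that $G$ acts freely on $\hat S$. Pick a pseudo-Anosov $\hat\phi_0$ on $\hat S_0$ whose action on $H_1(\hat S_0;\mathbb{Q})$ has no root-of-unity eigenvalue, which is a generic condition. The orbit of $\alpha$ under the mapping class group of $\hat S_0$ is finite, so some power $\hat\phi_0^N$ preserves $\ker\alpha$ and lifts to a pseudo-Anosov $\hat\phi$ on $\hat S$. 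After a further power, the lift $\hat\phi$ commutes with $G$. Since some power of $\hat\phi_0$ has non-singular fixed points $x_0$, and since the preimage $\pi^{-1}(x_0)=\{x_g\}_{g\in G}$ (a free $G$-orbit of $|G|$ points) is permuted by a finite-order permutation under $\hat\phi$, yet another power arranges $\hat\phi(x_g)=x_g$ for every $g$.

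Second, puncture and form the mapping torus. Set $S=\hat S\setminus\{x_g\}_{g\in G}$ and $W=(S\times I)/\hat\phi$. By Thurston's hyperbolization, $W$ is a finite-volume hyperbolic 3-manifold fibering over $S^1$ with pseudo-Anosov monodromy on $S$; it has $|G|$ cusps, freely and transitively permuted by $G$. On the $g$-th cusp, denote by $\ell_g$ the longitude in the direction of $\partial_g S$ and by $t$ the generator of the $S^1$-transverse direction, so that the fibration class $\omega\in H^1(W;\mathbb{Z})$ satisfies $\omega(\ell_g)=0$ and $\omega(t)=1$. For a fixed integer $k$, $G$-equivariantly Dehn-fill each cusp along the slope $t+k\ell_g$, yielding a closed orientable 3-manifold $M$ and a link $L=\bigsqcup_g L_g$ of solid-torus cores. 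The $G$-action extends freely to $M$ and permutes the $L_g$ as required; the meridian of $L_g$ is $m_g=t+k\ell_g$ by construction, so $\omega(m_g)=1$ and $\omega=\sum_{g\in G}(m_g)^*$ is the fibration class of $M\setminus L=W$.

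The main obstacle is securing that $M$ is a rational homology 3-sphere. Using $H_1(W;\mathbb{Q})\cong H_1(S;\mathbb{Q})_{\hat\phi}\oplus\mathbb{Q}\langle t\rangle$ together with the $\hat\phi$-invariance of the boundary classes $\ell_g$, a Mayer--Vietoris computation shows $H_1(M;\mathbb{Q})\cong H_1(\hat S;\mathbb{Q})_{\hat\phi}$, the coinvariants of $\hat\phi_\ast$ on the closed surface. This vanishes exactly when $\hat\phi_\ast-1$ is invertible on $H_1(\hat S;\mathbb{Q})$. On the $G$-invariant part $H_1(\hat S;\mathbb{Q})^G\cong H_1(\hat S_0;\mathbb{Q})$ this follows from the no-root-of-unity hypothesis on $\hat\phi_0$, and on the non-invariant isotypic components it is a generic condition on the pair $(\alpha,\hat\phi_0)$ that I would arrange by a careful initial choice. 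This is the delicate step of the argument; hyperbolicity of $M$ is then automatic for all sufficiently large $|k|$ by Thurston's hyperbolic Dehn surgery theorem, completing the construction.
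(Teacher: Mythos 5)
Your construction takes a genuinely different route from the paper: the paper starts from the Cooper--Long theorem (a hyperbolic rational homology sphere with a free $G$-action), passes to the quotient $N=M/G$, and uses Harer's theorem (fibred knots realize exactly $[\pi_1,\pi_1]$, so after taking a power one finds a fibred knot $K$ with $\rho([K])=1_G$) together with Soma's hyperbolization of fibred knots; the link is then simply $L=p^{-1}(K)$ and the fibration lifts. Your proposal instead tries to build everything upstairs from a $G$-equivariant pseudo-Anosov and close up by equivariant Dehn filling. Most of your intermediate steps are fine: powers of $\hat\phi_0$ preserving $\ker\alpha$, commuting with the deck group, and fixing a free orbit of non-singular points all work; $W$ is hyperbolic with $|G|$ cusps permuted freely and transitively; the filling slopes $t+k\ell_g$ give $\omega(m_g)=1$; and your Mayer--Vietoris reduction $H_1(M;\mathbb{Q})\cong H_1(\hat S;\mathbb{Q})_{\hat\phi}$ is correct.

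However, there is a genuine gap exactly where you flag ``the delicate step'': you need $\hat\phi_\ast-1$ to be invertible on all of $H_1(\hat S;\mathbb{Q})$, and the assertion that this ``is a generic condition on the pair $(\alpha,\hat\phi_0)$ that I would arrange by a careful initial choice'' is not a proof and is not obviously true. The no-root-of-unity hypothesis on $\hat\phi_0$ only controls the $G$-invariant part $H_1(\hat S_0;\mathbb{Q})$. On the remaining isotypic components of the $G$-representation $H_1(\hat S;\mathbb{Q})$, controlling the spectrum of a \emph{lifted} mapping class is precisely the hard problem underlying virtual Betti number questions and the Putman--Wieland circle of ideas; whether a fixed vector appears in the cover is not a transversality-type genericity statement one can simply ``arrange,'' and no mechanism is offered for producing even one pair $(\alpha,\hat\phi_0)$ with the required property for an arbitrary finite group $G$. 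In effect you are asserting a fibred, equivariant analogue of Cooper--Long without proof, whereas the entire point of the paper's argument is to outsource this difficulty: Cooper--Long supplies the rational homology sphere with free $G$-action, and Harer plus Soma supply the fibred (hyperbolic) knot downstairs, so that no spectral condition on a lifted monodromy ever needs to be verified. To repair your argument you would need either to prove the genericity claim on every isotypic component or to replace it with a citation-level input of comparable strength; as written the lemma is not established.
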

\begin{rmk}
Since $M$ is a rational homology $3$-sphere, the set of meridians $\{m_g\}_{g\in G}$ is a basis of 
the free part of $H_1(M\setminus L;\mathbb{Z})$.
By this basis, we can uniquely determine the dual $(m_g)^*\in H^1(M\setminus L; \mathbb{Z})$ of $m_g\in H_1(M\setminus L;\mathbb{Z})$.
The set of duals $(m_g)^*_{g\in G}$ is a basis of $H^1(M\setminus L; \mathbb{Z})$.
\end{rmk}
The key ingredients of the proof of Lemma \ref{lem.key} are following three theorems.
The first theorem is due to Harer. 
\begin{thm}[\cite{Har}]\label{thm.Har}
Let $M$ be a smooth, closed orientable $3$-manifold.
The collection of all elements of $\pi_1(M)$ which can be represented by fibred knots is exactly the commutator subgroup
$[\pi_1(M),\pi_1(M)]$.
\end{thm}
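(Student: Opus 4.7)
The plan is to produce $M$ as a regular $G$-cover $\pi:M\map N$ of a hyperbolic rational homology $3$-sphere $N$, and to place in $N$ a single hyperbolic fibred knot $K$ whose class in $\pi_1(N)$ lies in $\ker\pi_*$. Then $L:=\pi^{-1}(K)$ automatically has $|G|$ components permuted freely and transitively by $G$, and the required fibration on $M\setminus L$ will come from pulling back a fibration of $N\setminus K$. The role of Harer's theorem is to produce the fibred knot $K$ in the correct homotopy class.

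First I would construct the base. I want a closed hyperbolic rational homology $3$-sphere $N$ together with a surjection $\rho:\pi_1(N)\twoheadrightarrow G$ whose kernel defines a regular cover $M\to N$ with $M$ also a rational homology sphere. One way to produce this is to start from a hyperbolic link $L_0\subset S^3$ whose exterior group surjects onto $G$ (always possible by taking $L_0$ with enough components and composing with a surjection from its free abelian quotient onto $G$), and then to perform hyperbolic Dehn fillings on all components. By Thurston's hyperbolic Dehn surgery theorem almost every choice of slopes yields a closed hyperbolic manifold, and one may choose the slopes so that the filling kills $H_1(\cdot;\mathbb{Q})$, preserves the surjection onto $G$, and makes the associated $G$-cover rationally acyclic as well (passing to a further finite cover of the base if necessary). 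The resulting $M$ carries a free $G$-action with quotient $N$, both hyperbolic rational homology spheres.

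Next, pick the knot. Since $H_1(N;\mathbb{Z})$ is finite, $[\pi_1(N),\pi_1(N)]$ has finite index in $\pi_1(N)$, and so does $\ker(\rho)$; their intersection is therefore a normal subgroup of finite index in the infinite group $\pi_1(N)$, hence contains a nontrivial element $\gamma$. By Theorem \ref{thm.Har}, some fibred knot $K\subset N$ represents $\gamma$. If $N\setminus K$ is not already hyperbolic, modify $K$ within its free homotopy class by a local stabilization inside a small ball with a hyperbolic fibred knot, an operation that preserves both the homotopy class and fibredness. Because $[K]=\gamma\in\ker(\rho)$, the preimage $L:=\pi^{-1}(K)$ in $M$ consists of $|G|$ disjoint knots permuted freely and transitively by $G$; labelling one component $L_e$ and setting $L_g:=g\cdot L_e$ gives \eqref{eq.label}. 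Moreover $M\setminus L$ is a finite regular cover of the hyperbolic manifold $N\setminus K$, so it is hyperbolic and $L$ is a hyperbolic link.

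Finally, identify the cohomology class. Let $\omega_N\in H^1(N\setminus K;\mathbb{Z})$ be the class of the fibration of $N\setminus K$, normalized so that $\omega_N(m)=1$ on the meridian $m$ of $K$. The composition of $\pi:M\setminus L\to N\setminus K$ with the fibration $N\setminus K\to S^1$ is itself a fibration of $M\setminus L$ over $S^1$ with class $\pi^*\omega_N$. For each $g\in G$ the meridian $m_g$ of $L_g$ satisfies $\pi_*m_g=m$, hence $(\pi^*\omega_N)(m_g)=\omega_N(m)=1$; since $\{m_g\}_{g\in G}$ is a basis of the free part of $H_1(M\setminus L;\mathbb{Z})$ (as noted in the remark), this forces $\pi^*\omega_N=\sum_{g\in G}(m_g)^*$, exhibiting the sum of duals as a fibration class. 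The main obstacle is the first step: simultaneously securing hyperbolicity, the rational homology sphere condition both downstairs and upstairs, and the surjection $\rho$ requires careful arithmetic on the homology long exact sequence of the Dehn filling and on the transfer in the $G$-cover, while Steps 2 and 3 are comparatively routine once $N$ is in hand.
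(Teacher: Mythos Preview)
Your proposal does not address the stated theorem. Theorem~\ref{thm.Har} is Harer's result, quoted from \cite{Har} and not proved in the paper; what you have written is an argument for Lemma~\ref{lem.key}, in which Theorem~\ref{thm.Har} is invoked as a tool. Assuming Lemma~\ref{lem.key} is the intended target, here is the comparison.

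The overall architecture matches the paper's proof: build a quotient $N=M/G$, place in $N$ a hyperbolic fibred knot $K$ with $[K]\in\ker\rho$, and set $L=p^{-1}(K)$; then pull back the fibration and identify its class as $\sum_{g}(m_g)^*$. Your final paragraph, identifying the pullback class via $(\pi^*\omega_N)(m_g)=1$, is correct and is exactly what the paper does. The differences, and the gaps, are in the two places where the paper appeals to cited theorems and you substitute ad~hoc constructions.

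For the base manifold, the paper simply quotes Cooper--Long (Theorem~\ref{thm.CL}) to obtain a hyperbolic rational homology sphere $M$ with a free $G$-action, then sets $N=M/G$. Your Dehn-surgery sketch aims at the same conclusion but, as you acknowledge, leaves unresolved precisely the hard part: arranging simultaneously the surjection onto $G$, hyperbolicity, and $H_1(M;\mathbb{Q})=0$ for the \emph{cover}. That is essentially the content of \cite{CL}, and your outline (``passing to a further finite cover of the base if necessary'') does not supply it; passing to a further cover of $N$ will in general destroy the regular $G$-cover structure you need.

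For hyperbolicity of $K$, the paper invokes Soma (Theorem~\ref{thm.Som}). Your substitute, ``local stabilization inside a small ball with a hyperbolic fibred knot'', is not a well-defined operation and does not force hyperbolicity of the complement: any essential torus in $N\setminus K$ disjoint from the ball survives such a local modification, so the result could still be a satellite. Soma's theorem is genuinely required here and is not a routine step.

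In summary, your strategy coincides with the paper's, but the two cited inputs (Theorems~\ref{thm.CL} and~\ref{thm.Som}) are doing real work that your sketches do not replace.
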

\begin{rmk}\label{rem.meridian}
In \cite{Har}, a knot $K\subset M$ is called a fibred knot if $M\setminus K$ is a fibre bundle over $S^1$ with projection 
map $\phi$ and the restriction of $\phi$ on the meridian of $K$ is surjective.
\end{rmk}
The second theorem is due to Soma. 
\begin{thm}[\cite{Som}, Theorem 1.]\label{thm.Som}
Let $M$ be a smooth, closed orientable, irreducible $3$-manifold.
Then every fibred knot in $M$ is homotopic to a hyperbolic fibred knot.
\end{thm}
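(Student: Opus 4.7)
I will realize $(M,L)$ as a regular $G$-cover of a pair $(M',K)$, where $M'$ is a closed hyperbolic 3-manifold, $K\subset M'$ is a single hyperbolic fibred knot, and the free homotopy class of $K$ sits in the kernel of the surjection $\rho:\pi_1(M')\to G$ defining the cover $\pi:M\to M'$. Then $L:=\pi^{-1}(K)$ will inherit a free transitive $G$-action from the deck transformations, while the fibration of $M\setminus L$ together with its cohomology class $\sum_g(m_g)^*$ will be pulled back from the fibration on $M'\setminus K$.

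\textbf{Base pair.} First I would produce a closed hyperbolic 3-manifold $M$ that is a rational homology 3-sphere and admits a free action of $G$, together with the quotient covering $\pi:M\to M':=M/G$. Such an $M$ can be obtained either by equivariant Dehn filling on a $G$-symmetric hyperbolic link (controlling homology through the filling slopes and Thurston's hyperbolic Dehn surgery theorem) or by a suitable congruence cover in an arithmetic lattice. The quotient $M'$ is then a closed orientable hyperbolic 3-manifold, and the transfer map even yields an injection $H_1(M';\mathbb{Q})\hookrightarrow H_1(M;\mathbb{Q})=0$, so $M'$ is itself a rational homology 3-sphere.

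\textbf{Producing and lifting the fibred knot.} Since $\pi_1(M)\subset\pi_1(M')$ is a non-elementary hyperbolic group, it contains non-commuting elements $a,b$, and the commutator $\gamma:=[a,b]$ lies simultaneously in $\ker\rho$ and in $[\pi_1(M'),\pi_1(M')]$. By Harer's Theorem \ref{thm.Har}, some fibred knot $K_0\subset M'$ represents $\gamma$, and by Soma's Theorem \ref{thm.Som} (applicable since $M'$ is closed, orientable, and irreducible), I may replace $K_0$ by a homotopic hyperbolic fibred knot $K$; in particular $[K]=\gamma\in\ker\rho$. Setting $L:=\pi^{-1}(K)$, the condition $[K]\in\ker\rho$ forces each of the $|G|$ lifts of $K$ to be a loop, so $L$ has exactly $|G|$ components; choosing one as $L_e$ and declaring $L_g:=g\cdot L_e$ realises (\ref{eq.label}) with a free and transitive action. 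The complement $M\setminus L$ is a finite regular $G$-cover of the hyperbolic fibred 3-manifold $M'\setminus K$, hence is itself hyperbolic and fibres over $S^1$. Writing $\omega_K\in H^1(M'\setminus K;\mathbb{Z})$ for the fibration class, Remark \ref{rem.meridian} gives $\omega_K(m_K)=1$; since $\pi_*m_g=m_K$ for every $g\in G$, we obtain $(\pi^*\omega_K)(m_g)=1$, and the basis statement in the remark following the lemma then forces $\pi^*\omega_K=\sum_{g\in G}(m_g)^*$. This class is a fibration class on $M\setminus L$, as required.

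\textbf{Main obstacle.} The hard part is the first step: arranging hyperbolicity, rational acyclicity, and a free action of an arbitrary finite group $G$ all on the same closed 3-manifold $M$. Each condition is standard in isolation, but coordinating all three for an arbitrary $G$ requires either a delicate equivariant surgery construction with a homology count or nontrivial input from the theory of arithmetic hyperbolic 3-manifolds. Once such a base pair is in place, the combination of Harer's theorem, Soma's theorem, and the naturality of fibration classes under finite covers finishes the proof essentially formally.
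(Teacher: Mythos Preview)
Your proposal does not address the stated theorem. Theorem~\ref{thm.Som} is Soma's result, quoted from \cite{Som} and not proved in this paper; indeed, you invoke it yourself as a black box midway through your argument. What you have actually written is a proof sketch for Lemma~\ref{lem.key}.

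Viewed as an argument for Lemma~\ref{lem.key}, your outline follows the paper's architecture exactly: take a hyperbolic rational homology $3$-sphere $M$ with a free $G$-action, pass to the quotient, use Harer (Theorem~\ref{thm.Har}) and then Soma (Theorem~\ref{thm.Som}) to produce a hyperbolic fibred knot $K$ downstairs whose free homotopy class lies in $\ker\rho$, lift to $L=\pi^{-1}(K)$, and pull back the fibration class. Your choice of $\gamma=[a,b]$ with $a,b\in\pi_1(M)\subset\pi_1(M')$ is a marginally cleaner way to land simultaneously in $\ker\rho$ and in the commutator subgroup than the paper's device of taking an arbitrary commutator and raising it to a power, but the difference is cosmetic.

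The one substantive gap is precisely the step you label the ``main obstacle'': producing a closed hyperbolic rational homology $3$-sphere with a free action of an \emph{arbitrary} finite group $G$. The paper does not improvise this; it quotes it as Theorem~\ref{thm.CL}, due to Cooper--Long \cite{CL}. Your proposed routes (equivariant Dehn filling with homology control, or arithmetic congruence covers) are reasonable heuristics---and in fact close in spirit to Cooper--Long's own argument---but you have not carried either out, so as written the proof is incomplete at exactly this point. Once Theorem~\ref{thm.CL} is granted, your argument and the paper's are essentially identical.
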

\begin{rmk}
In \cite{Som}, Soma has proved more strongly that every fibred knot in $M$ is fibre-concordant to a hyperbolic knot.
However to prove Lemma \ref{lem.key}, we do not need this fact.
\end{rmk}
Finally we will also refer to a theorem by Cooper-Long.
\begin{thm}[\cite{CL}, Theorem 2.6.]\label{thm.CL}
Let $G$ be a finite group. Then there is a hyperbolic rational homology $3$-sphere on which $G$
acts freely.
\end{thm}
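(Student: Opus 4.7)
The plan is to construct the link $L$ as the preimage, under the covering projection $p \colon M \to N := M/G$, of a single hyperbolic fibred knot $K \subset N$. Taking such a preimage makes $L$ automatically invariant under the deck action, with $G$ acting freely and transitively on components, and lifting the fibration of $K$ provides a fibration of $M \setminus L$; the real work is choosing the homotopy class of $K$ so that all the conditions line up.

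Apply Theorem \ref{thm.CL} to obtain a closed hyperbolic rational homology 3-sphere $M$ equipped with a free, orientation-preserving $G$-action. The quotient $N := M/G$ is then a closed orientable hyperbolic 3-manifold, in particular irreducible, so both Theorems \ref{thm.Har} and \ref{thm.Som} apply to $N$. Identify $\pi_1(M)$ with its image in $\pi_1(N)$, so that $\pi_1(N)/\pi_1(M) \cong G$, and choose a nontrivial element $\alpha \in [\pi_1(M), \pi_1(M)]$; such an $\alpha$ exists because $\pi_1(M)$ is nonabelian. Viewed in $\pi_1(N)$, the element $\alpha$ automatically lies in $[\pi_1(N), \pi_1(N)]$, so Theorem \ref{thm.Har} produces a fibred knot $K_0 \subset N$ representing $\alpha$, and Theorem \ref{thm.Som} homotopes $K_0$ to a hyperbolic fibred knot $K \subset N$, which still represents $\alpha$.

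Now set $L := p^{-1}(K)$. Since $[K] = \alpha \in \pi_1(M)$, every lift of $K$ to $M$ closes up to a loop, so $L$ consists of exactly $|G|$ components, each mapped homeomorphically to $K$ by $p$. Freeness of the $G$-action on $M$ prevents a nontrivial element of $G$ from preserving a component setwise, while transitivity of $G$ on each fibre of $p$ gives transitivity on components; fixing one component as $L_e$ and defining $L_g := g \cdot L_e$ yields the labelling (\ref{eq.label}). Writing $\phi \colon N \setminus K \to S^1$ for the fibration of $K$, the composition $\phi \circ p \colon M \setminus L \to S^1$ is again a fibration. Moreover $M \setminus L$ is a finite cover of the hyperbolic manifold $N \setminus K$, so $M \setminus L$ is itself hyperbolic, and hence $L$ is a hyperbolic fibred link.

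It remains to identify the cohomology class. By Remark \ref{rem.meridian}, $\phi$ restricts to a surjection on the meridian of $K$, and since $p$ carries each $m_g$ homeomorphically onto this meridian, the fibration class $\omega \in H^1(M \setminus L;\mathbb{Z})$ of $\phi \circ p$ satisfies $\omega(m_g) = \pm 1$ for every $g \in G$, with a common sign that we may take to be $+1$ after possibly reversing the orientation of $S^1$. Combined with the fact noted in the remark after the lemma statement, that $\{(m_g)^*\}$ is a basis of $H^1(M \setminus L;\mathbb{Z})$, this forces $\omega = \sum_{g \in G}(m_g)^*$. The main delicacy is to arrange simultaneously that $L$ has $|G|$ components (needing $\alpha \in \pi_1(M)$) and that $K_0$ is a fibred knot in $N$ (needing $\alpha \in [\pi_1(N),\pi_1(N)]$); this is handled uniformly by choosing $\alpha \in [\pi_1(M),\pi_1(M)]$, which sits inside both subgroups.
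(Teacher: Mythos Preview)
Your write-up does not prove the displayed statement: Theorem~\ref{thm.CL} is the Cooper--Long result, which the paper merely quotes from \cite{CL} and does not reprove. What you have actually written is a proof of Lemma~\ref{lem.key}, using Theorem~\ref{thm.CL} as a black box.

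Read as a proof of Lemma~\ref{lem.key}, your argument is correct and follows essentially the same route as the paper's: apply Theorem~\ref{thm.CL} to obtain $M$, pass to $N=M/G$, use Theorems~\ref{thm.Har} and~\ref{thm.Som} to produce a hyperbolic fibred knot $K\subset N$ whose class lies in $\ker\rho=\pi_1(M)$, pull back to get $L=p^{-1}(K)$, and identify the lifted fibration class with $\sum_g (m_g)^*$. The one small difference is in how $[K]\in\ker\rho$ is arranged: the paper picks an arbitrary fibred knot $K'\subset N$ and replaces its class by a power so that $\rho([K'{}^m])=1_G$, whereas you choose $\alpha\in[\pi_1(M),\pi_1(M)]\subset[\pi_1(N),\pi_1(N)]$ from the start and invoke Harer once. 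Your variant is marginally cleaner but not a genuinely different approach.
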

\begin{proof}[Proof of Lemma \ref{lem.key}.]
By Theorem \ref{thm.CL},  there is a hyperbolic rational homology $3$-sphere $M$ on which $G$ acts freely.
We consider the quotient $N:=M/G$.
Let $p:M\map N$ denote the projection map.
The map $p$ can also be regarded as a covering map associated to a surjection $\rho:\pi_1(N)\map G$.
By Theorem \ref{thm.Har}, there is a fibred knot $K'$ such that $[K']\in[\pi_1(M), \pi_1(M)]$.
Since $G$ is a finite group, by taking power of $[K']$ if necessary, 
we see that there is a fibred knot $K\subset N$ such that $\rho([K]) = 1_G$,
where $1_G$ is the unit element of $G$.
Furthermore, by Theorem \ref{thm.Som}, we may assume that $K$ is a hyperbolic fibred knot in $N$.
By appealing to standard facts of covering spaces, we see that $L:=p^{-1}(K)\subset M$ has exactly $|G|$ components and 
admits labelling on its components that satisfies (\ref{eq.label}).
Note that since $M$ is a rational homology $3$-sphere, the quotient $N$ also satisfies $H^1(N;\mathbb{Q}) = 0$.
By Remark \ref{rem.meridian},
the dual $m^*\in H^1(N\setminus K;\mathbb{Z})$ of the meridian $m$ of $K$ corresponds to the fibration.
By construction, each component of $p^{-1}(m)$ is the meridian of some $L_g$ and their sum in the first
cohomology $\sum_{g\in G}(m_g)^*\in H_1(M)$ corresponds to 
the fibration which is the lift of the fibration associated to $m^*$ on $N\setminus K$.
\end{proof}

The following lemma follows from standard facts of covering spaces and fibrations.
Note that for a given fibration on $M$ with fibre $F$, there is an associated exact sequence 
\[1\map \pi_1(F)\stackrel{\iota}{\map} \pi_1(M)\rightarrow \pi_1(S^1)\map 1.\]

\begin{lem}\label{lem.cover}
Let $G$ be a finite group and $M$ a fibred 3-manifold with fibre surface $F$.
Further, let $p:M'\map M$ be a covering corresponding to a 
surjection $\rho:\pi_1(M)\map G$.
Then, $p^{-1}(F)\subset M'$ has 
\[\frac{|G|}{|\rho(\iota(\pi_1(F)))|}\]
components and each component is a degree $|\rho(\iota(\pi_1(F)))|$ covering of $F$.
\end{lem}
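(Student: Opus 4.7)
The plan is to reduce the count to the standard correspondence between components of a covering space and orbits of the monodromy action on a fibre.

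First I would identify the algebraic data: the covering $p:M'\to M$ corresponds to the normal subgroup $\ker\rho\subseteq \pi_1(M)$ of index $|G|$, so $G$ acts freely and transitively on each fibre of $p$ by deck transformations. Restricting $p$ to $p^{-1}(F)$ therefore gives a covering $p^{-1}(F)\to F$ of degree $|G|$.

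Next I would fix a basepoint $f_0\in F$ together with a lift $\tilde f_0\in p^{-1}(f_0)$, and identify $p^{-1}(f_0)$ with $G$ via the deck action (with $\tilde f_0 \leftrightarrow 1_G$). The monodromy action of $\pi_1(F,f_0)$ on this fibre is by definition the path-lifting action of loops in $F$; since a loop $\gamma\subset F$ lifts in $M'$ according to the class $\rho(\iota([\gamma]))\in G$, this action factors through $\rho\circ\iota:\pi_1(F)\to G$ and acts by left translation in $G$.

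Then, by the standard covering-space correspondence (components of $p^{-1}(F)$ are in bijection with orbits of the monodromy on a fibre), each component of $p^{-1}(F)$ corresponds to an orbit of $\rho(\iota(\pi_1(F)))$ on $G$ by left translation; these orbits are simply the right cosets of $\rho(\iota(\pi_1(F)))$. There are $|G|/|\rho(\iota(\pi_1(F)))|$ of them, each of size $|\rho(\iota(\pi_1(F)))|$. Translating back yields the claimed number of components, and the degree of each component over $F$ equals the size of the corresponding orbit.

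The argument is a direct unpacking of covering-space theory, so I do not foresee a serious obstacle. The only point requiring genuine care is the verification that the monodromy on the fibre truly factors through $\rho\circ\iota$, and that every orbit has uniform size. The first uses that $F\subset M$ is a fibre (so $\iota$ is the inclusion-induced map on $\pi_1$), and the second uses normality of $\ker\rho$ together with freeness of the $G$-action on fibres.
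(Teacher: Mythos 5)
Your argument is correct. The paper offers no proof of this lemma, stating only that it ``follows from standard facts of covering spaces and fibrations,'' and your orbit count --- identifying components of $p^{-1}(F)$ with orbits of the monodromy action of $\pi_1(F)$ on the fibre $p^{-1}(f_0)\cong G$, which factors through $\rho\circ\iota$ and whose orbits are the cosets of $\rho(\iota(\pi_1(F)))$ --- is exactly the standard argument being invoked.
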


The following lemma follows from the theory of fibred cones due to Thurston \cite{Thu.norm}.
\begin{lem}\label{lem.cone}
Let $M$ be a hyperbolic $3$-manifold of the first Betti number $>1$.
Suppose $\omega\in H^1(M;\mathbb{Z})$ corresponds to a fibration on $M$.
Then for any $\omega'\in H^1(M;\mathbb{Z})$, there exists $N\in\mathbb{N}$ such that for any $n>N$,
$n\omega + \omega'$ corresponds to a fibration on $M$.
\end{lem}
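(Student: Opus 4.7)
The plan is to invoke Thurston's theorem on the Thurston norm on $H^1(M;\mathbb{R})$, which is explicitly the reference cited in the statement. Recall that Thurston showed that the Thurston norm has a polyhedral unit ball, and that the integral classes $\alpha \in H^1(M;\mathbb{Z})$ corresponding to fibrations over $S^1$ are precisely the integral points lying in the interiors of the open cones on a distinguished set of top-dimensional faces of the unit ball, the so-called fibered faces. In particular, the locus of fibration classes is the set of integral points in an open, positively scale-invariant subset of $H^1(M;\mathbb{R})$.

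First I would note that since $\omega \in H^1(M;\mathbb{Z})$ corresponds to a fibration, it lies in the interior of some fibered cone $C \subset H^1(M;\mathbb{R})$. Fixing any norm $\|\cdot\|$ on $H^1(M;\mathbb{R})$, openness of $C$ provides an $\epsilon > 0$ such that the open ball of radius $\epsilon$ around $\omega$ is contained in $C$. Given the class $\omega' \in H^1(M;\mathbb{Z})$ of the lemma, I would then set $N := \lceil \|\omega'\|/\epsilon \rceil$. For any integer $n > N$, the perturbed class $\omega + \omega'/n$ lies within distance $\epsilon$ of $\omega$ and therefore in $C$.

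Finally, since $C$ is a cone, multiplication by the positive scalar $n$ preserves it, so $n\omega + \omega' = n(\omega + \omega'/n) \in C$. Being an integral class in a fibered cone, $n\omega + \omega'$ corresponds to a fibration on $M$, which is exactly the conclusion.

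The argument is a direct perturbation, so there is no real obstacle once Thurston's fibered face theorem is in hand; the only point requiring care is citing the theorem in the correct form, namely the openness of the fibered cone together with its invariance under positive scaling. Note that the hypothesis $b_1(M) > 1$ plays no role in this perturbation step itself; it only ensures that the statement has nontrivial content, in the sense that $\omega'$ is not forced to be a multiple of $\omega$.
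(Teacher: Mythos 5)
Your proposal is correct and follows essentially the same route as the paper: both rest on Thurston's fibered face theorem and the observation that the ray through $n\omega+\omega'$ approaches the ray through $\omega$, so that for large $n$ the class $n\omega+\omega'$ lies in the same open fibered cone as $\omega$. Your version merely makes the limiting argument quantitative by rescaling to $\omega+\omega'/n$ and choosing $N$ explicitly from the radius of a ball around $\omega$ contained in the cone.
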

\begin{proof}
Thurston introduced a semi-norm, so called the Thurston norm, on $H^1(M;\mathbb{R})$.
In \cite{Thu.norm}, Thurston proved that the unit ball of the Thurston norm is a compact convex polygon.
Furthermore, Thurston showed the following.
Let $C_{\Delta}$ be a cone over a top dimensional face $\Delta$ of the unit ball.
Then if an integral element 
(i.e. an element on $H^1(M:\mathbb{Z}) \subset H^1(M;\mathbb{R})$)  
of the interior of $C_{\Delta}$ corresponds to a fibration on $M$, 
then every integral element in the interior of $C_{\Delta}$ corresponds to a fibration 
(see \cite{Thu.norm}, or \cite{Kapo} for details).
Such a cone is called a {\em fibered cone}.
Since the sequence of lines that connects the origin and $n\omega + \omega'$ converges to the line that connects
the origin and $\omega$ as $n$ goes to infinity, we see that for large enough $n$, $n\omega + \omega'$ is in 
the interior of the same cone as $\omega$.
Thus we see that $n\omega + \omega'$ corresponds to a fibration.
\end{proof}
\begin{proof}[Proof of Theorem \ref{thm.main}.]
We first apply Lemma \ref{lem.key} to $G = \Zk$.
We use the notations in Lemma \ref{lem.key}.
Let $\omega = \sum_{[s]\in\Zk } (m_{[s]})^*$ and $\omega' = \sum_{[s]\in \Zk} \mathrm{mod}_k(s)(m_{[s]})^* $ where
$\mathrm{mod}_k:\mathbb{Z}\map \{1,2,...,k\}$ denote the quotient modulo $k$, here we assign $k$ instead of $0$ to integers divisible by $k$.
Then by Lemma \ref{lem.cone}, we see that for large enough $l$, $\eta := l\omega + \omega'$ corresponds to a fibration on
$M\setminus L$.
Let $F_{\eta}$ be the fibre surface of the fibration associated to $\eta$.
Then by appealing to the fibration structure, we have an exact sequence
\begin{eqnarray}
\label{fibre.exact} 1\map \pi_1(F_{\eta})\stackrel{\iota}{\map} \pi_1(M\setminus L)\stackrel{\eta}{\rightarrow} \pi_1(S^1)\map 1.
\end{eqnarray}
The map $\eta$ factors through the abelianization $\mathrm{ab} : \pi_1(M\setminus L) \map H_1(M\setminus L;\mathbb{Z})$ since $\pi_1(S^1)\cong \mathbb{Z}$ is abelian.
This implies that $A_{\eta} := \mathrm{ab}(\iota(\pi_1(F_{\eta})))$ is equal to 
$\ker(\eta)/\mathrm{Tor}\subset H_1(M\setminus L;\mathbb{Z})$ which is isomorphic to $\mathbb{Z}^{k-1}$.
Note that the number $k$ of components of $L$ is the first Betti number of $M\setminus L$ since
$M$ is a rational homology 3-sphere.
We note that the set of the dual of meridians $\{(m_{[s]})^*\}_{[s]\in\Zk}$ is a basis of $H^1(M\setminus L; \mathbb{Z})$.
In the first homology, $\{(m_{[s]})_*\}_{[s]\in\Zk}$ is a basis of the free part of $H_1(M\setminus L;\mathbb{Z})$.
The action of $\Zk$ on $M$ induces an action on $H_1(M\setminus L;\mathbb{Z})$ and since $[s']\cdot L_{[s]} = L_{[s+s']}$,
we see that $[s']\cdot m_{[s]} = m_{[s+s']}$.
The free part of the kernel of $[s]\cdot \eta$ in $H_1(M\setminus L;\mathbb{Z})$ is equal to $[s]\cdot A_\eta$.
Then we consider the dynamical covering $p_d:M_d\map M\setminus L$ of degree $d$, that is, the covering corresponding to taking the $d$-th power of the monodromy map of the fibration associated to $\eta$.
This covering is a covering associated to the surjection map
\[\pi_1(M)\stackrel{\rm ab}{\rightarrow}H_1(M)\stackrel{\eta}{\rightarrow}\mathbb{Z}\map \mathbb{Z}/(d\mathbb{Z}).\]
We will compute the image $\eta([s]\cdot A_{\eta})\subset \mathbb{Z}$.
\begin{claim}\label{claim}
$\eta([s]\cdot A_{\eta}) = \gcd(s,k-s)\cdot\mathbb{Z}$ where $\gcd$ is the greatest common devisor.
\end{claim}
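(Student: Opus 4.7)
The plan is to recast $\eta([s]\cdot A_\eta)$ as the axis-projection of a sublattice of $\mathbb{Z}^2$, and then pin down that projection by evaluating $2\times 2$ minors explicitly. Set $c_{[u]}:=\eta(m_{[u]})=l+\mathrm{mod}_k(u)$ for $[u]\in\Zk$; since $[s]\cdot m_{[u]}=m_{[u+s]}$, unwinding the definitions gives
\[
\eta([s]\cdot A_\eta) \;=\; \Big\{\textstyle\sum_{[u]} v_{[u]}\,c_{[u+s]} \;:\; v\in\mathbb{Z}^k,\ \sum_{[u]} v_{[u]}\,c_{[u]}=0\Big\},
\]
which is $\{y:(0,y)\in I\}\subseteq\mathbb{Z}$, where $I\subset\mathbb{Z}^2$ is the image of the $2\times k$ integer matrix $\Phi$ whose $[u]$-th column is $(c_{[u]},c_{[u+s]})^T$.

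The index $[\mathbb{Z}^2:I]$ equals the gcd of all $2\times 2$ minors of $\Phi$ whenever $I$ has rank $2$ (an immediate consequence of Smith normal form). Because $c_{[2]}-c_{[1]}=1$, the projection $I\to\mathbb{Z}$ to the first coordinate is surjective, so $I+\{0\}\times\mathbb{Z}=\mathbb{Z}^2$; the second isomorphism theorem then gives $\mathbb{Z}^2/I\cong(\{0\}\times\mathbb{Z})/(I\cap(\{0\}\times\mathbb{Z}))\cong\mathbb{Z}/q\mathbb{Z}$, where $q$ is the generator of $\eta([s]\cdot A_\eta)$. Hence $q=[\mathbb{Z}^2:I]$, and it suffices to show $\gcd\{2\times 2\text{ minors of }\Phi\}=\gcd(s,k-s)$; note that $\gcd(s,k-s)=\gcd(s,k)$.

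For the divisibility $\gcd(s,k)\mid q$, expand
\[
c_{[u]}c_{[v+s]} - c_{[v]}c_{[u+s]} \;=\; l\,(d_u+d_{v+s}-d_v-d_{u+s}) + (d_u d_{v+s}-d_v d_{u+s}),
\]
where $d_u:=\mathrm{mod}_k(u)\equiv u\pmod k$. The first parenthesis is $\equiv 0\pmod k$, and the second is $\equiv (u-v)s\pmod k$; since $\gcd(s,k)$ divides both $k$ and $s$, it divides every minor.

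For the reverse direction, take $u<v$ within one of the non-wrap blocks $\{1,\ldots,k-s\}$ or $\{k-s+1,\ldots,k\}$, using the representatives $\{1,\ldots,k\}$ for $\Zk$. On the first block $c_{[u+s]}=l+u+s$, and on the second $c_{[u+s]}=l+u+s-k$; the minor then simplifies to $(u-v)s$ or $(v-u)(k-s)$. Taking $v=u+1$ yields minors $-s$ and $k-s$ respectively. For $2\leq s\leq k-2$ both blocks contain adjacent pairs, forcing $\gcd\{\text{minors}\}\mid\gcd(s,k-s)$; the boundary cases $s=1$ and $s=k-1$ produce a minor equal to $\pm 1$ from the longer block, still matching $\gcd(s,k-s)=1$ (here one uses $k\geq 3$, which will be satisfied throughout the application). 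The only delicate point is the mod-$k$ bookkeeping of $u+s$; once separated into the two blocks, the minors reduce to elementary telescoping $2\times 2$ determinants, and the claimed identity follows.
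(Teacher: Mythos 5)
Your argument is correct (under the hypotheses $1\le s\le k-1$ and $k\ge 3$, which the paper also needs implicitly), and it reaches the answer by a genuinely different route in one of the two directions, so a comparison is worthwhile. The paper argues directly with the two linear forms: for $\alpha\in\ker([s]\cdot\eta)$ it subtracts the relation $\sum_i(l+\mathrm{mod}_k(i+s))\alpha_i=0$ from $\eta(\alpha)$ to obtain $\eta(\alpha)=-s(\alpha_1+\cdots+\alpha_{k-s})+(k-s)(\alpha_{k-s+1}+\cdots+\alpha_k)$, which gives the containment in $\gcd(s,k-s)\mathbb{Z}$ in one line; for the reverse containment it exhibits kernel elements $\beta$ supported on two adjacent meridians with $\eta(\beta)=-s$ or $k-s$. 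Your reverse direction is essentially the paper's: the paper's $\eta(\beta)$ is precisely your $2\times2$ minor $c_{[j]}c_{[j+s+1]}-c_{[j+1]}c_{[j+s]}$ for adjacent columns, evaluated to $-s$ and $k-s$ on the two blocks exactly as you do. Where you diverge is the containment direction: instead of the subtraction trick you identify the generator $q$ of the image with the lattice index $[\mathbb{Z}^2:I]$ via the second isomorphism theorem, invoke the Smith-normal-form identity $[\mathbb{Z}^2:I]=\gcd(2\times2\ \text{minors})$, and bound every minor modulo $k$ through $d_ud_{v+s}-d_vd_{u+s}\equiv(u-v)s$. This is heavier machinery but more systematic---it would compute $\eta([s]\cdot A_\eta)$ for any integral class with the same formal structure---whereas the paper's subtraction is shorter and tailored to this particular $\eta$. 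A point in your favour: you make the boundary cases $s=1$, $s=k-1$ and the hypothesis $k\ge3$ explicit (for $k=2$ the unique minor is $-(2l+3)$ and the claimed equality genuinely fails), whereas the paper's ranges $1\le j<k-s$ and $k-s<j<k-1$ can both be empty in small cases without comment; in the intended application $k=n!$ with $s\mid k$, so both arguments go through.
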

\begin{proof}[Proof of Claim \ref{claim}.]
Note that since torsion part of the first homology maps to the zero by any element of the first cohomology,
to see the image we only need to consider the free part.
Let $\alpha = \sum_{[i]\in\Zk} \alpha_i(m_{[i]})_{*}$ be an element of $\eta([s]\cdot A_{\eta})$.
Since $[s]\cdot A_{\eta} \subset \ker([s]\cdot \eta)$, we have
\begin{eqnarray} 
([s]\cdot\eta)(\alpha) = \sum_{[i]\in\Zk} (l+\mathrm{mod}_k(i+s))\alpha_i = 0. \label{eq.def}
\end{eqnarray} 
Then,
\begin{eqnarray}
\eta(\alpha) & = & \sum_{[i]\in\Zk} (l+\mathrm{mod}_k(i))\alpha_i \\
\label {eq.second} & = & (-s)(\alpha_1+ \cdots + \alpha_{k-s}) + (k-s) (\alpha_{k-s+1} + \cdots + \alpha_k)
\end{eqnarray}
Here to get (\ref{eq.second}), we subtracted (\ref{eq.def}).
Thus we have
$\eta([s]\cdot A_{\eta})\subset \gcd(s,k-s)\cdot\mathbb{Z}.$
Since $\displaystyle\eta = \sum_{[s]\in \Zk} (l+\mathrm{mod}_k(s))(m_{[s]})^* $, 
we see that for $j\not= k-s$
\[\beta := (l+\mathrm{mod}_k(j+s+1))(m_{[j]})_* - (l+\mathrm{mod}_k(j+s))(m_{[j+1]})_*\in \ker([s]\cdot\eta).\]
Then, for any $a\in\mathbb{Z}$
\begin{equation}
  \eta(a\beta)=\begin{cases}
    -sa ,   ~\text{if $1\leq j < k-s$,}\\
    (k-s)a,  ~\text{if $k-s<j<k-1$.}
  \end{cases}
\end{equation}
Hence by Euclidean algorithm, we see that $\eta([s]\cdot A_{\eta})= \gcd(s,k-s)\cdot\mathbb{Z}.$
This completes the proof of the claim.

\end{proof}

By Lemma \ref{lem.cover}, if $\gcd(s,m-s)|d$ then the pre-image $p_d^{-1}([s]\cdot F_{\eta})$ of the fibre has
$d/\gcd(s,m-s)$ components.
In other words, if we restrict $p_d$ to a component of $p_d^{-1}([s]\cdot F_{\eta})$, then it is of degree $\gcd(s,m-s)$.
Then suppose $k = n!$ and $d = n!$, then for each $1 \leq i \leq n$, the lift of the fibration corresponding to $[i]\cdot \eta$
by $p_d$ has fibre with Euler characteristics $i$ times that of $F_{\eta}$.
In particular, they are not symmetric to each other, however by construction, they are mutually commensurable.
This completes the proof of Theorem \ref{thm.main}.
\end{proof}

\begin{rmk}
Since $\Zk$ is a finite group, by a similar argument of the proof of Lemma \ref{lem.cone},
we may further assume that $[i]\cdot\eta$'s are on the same fibred cone for all $[i]\in\Zk$.
Hence the fibrations we constructed in Theorem \ref{thm.main} can be on the same fibred cone.
In this case, by a theorem of Fried \cite[Theorem 7]{Fri}, we see that the suspension flow by those fibrations are isotopic.
\end{rmk}

\section*{acknowledgements}
I would like to thank Sadayoshi Kojima for helpful discussion and encouragement.
This work was partially supported by JSPS Research Fellowship for Young Scientists.



\begin{thebibliography}{99}
\bibitem{CSW} D. Calegari, H. Sun and S. Wang,
{\it On fibered commensurability.}
Pacific J. Math. 250 (2011), no. 2, 287-317, arXiv:1003.0411v1.
\bibitem{CL} D. Cooper and D. D. Long,
{\it Free actions of finite groups on rational homology 3-spheres},
 Topology Appl. 101 (2000), no. 2, 143-148. 
\bibitem{Fri}D. Fried, {\it Fibrations over $S^1$ with pseudo-Anosov monodromy}, Expos\'e 14, Ast\'erisque, vol. 66-67
(1979).
\bibitem{Har} J. Harer, {\it Representing elements of $\pi_1(M^3)$ by fibred knots},
 Math. Proc. Cambridge Philos. Soc. 92 (1982), no. 1, 133-138.
\bibitem{Kapo} M. Kapovich,
{\it Hyperbolic manifolds and discrete groups},
Progress in Mathematics, 183.
Birkhuser Boston, Inc., Boston, MA, 2001.
\bibitem{Mas} H. Masai,
{\it On commensurability of fibrations on a hyperbolic 3-manifold}, Pacific J. Math., Vol. 266 (2013), No. 2, 313-327, arXiv:1210.0382.
\bibitem{Som} T. Soma, 
{\it Hyperbolic, fibred links and fibre-concordance},
Math. Proc. Cambridge Philos. Soc. 96 (1984), no. 2, 283-294.
\bibitem{Thu.norm}W. P. Thurston,
 {\it A norm for the homology of 3-manifolds.}
 Mem. Amer. Math. Soc. 59 (1986), i - vi and 99-130.
\end{thebibliography}
\end{document}